\theoremstyle{definition}
\newtheorem{theorem}{Theorem}
\newtheorem{lemma}[theorem]{Lemma}
\numberwithin{equation}{section}
\numberwithin{theorem}{section}
\newcommand{\Ker}{\operatorname{Ker}}
\begin{document}

\begin{center}
{\bf{\Large Notes on the multiplier systems of $\eta(\tau)$ and $\theta(\tau)$ }}
\end{center}

\begin{center}
By Kazuhide Matsuda
\end{center}

\begin{center}
Faculty of Fundamental Science, \\
National Institute of Technology (KOSEN), Niihama College,\\
7-1 Yagumo-chou, Niihama, Ehime, Japan, 792-8580. \\
E-mail: ka.matsuda@niihama-nct.ac.jp  \\
Fax: 81-0897-37-7809 
\end{center}

\noindent
{\bf Abstract}
The multiplier systems of $\eta^{2k}(\tau)$ and $\theta^{2k}(\tau)$ $(k\in\mathbb{Z})$ are characters. 
In this paper, we determine their kernels, $\Ker \nu_{\eta^{2k}}$ and $\Ker \nu_{ \theta^{2k} } $. 
\newline
{\bf Key Words:} eta function; theta constant; multiplier system
\newline
{\bf MSC(2010)}  14K25;  11E25

\section{Introduction}
\label{intro}
The {\it upper half plane} $\mathscr{H}$ is defined by 
$
\mathscr{H}=
\{
\tau\in\mathbb{C} \, | \,  \Im \tau>0
\}. 
$
Moreover, set $q=\exp(2\pi i  \tau)$ and define 
\begin{equation*}
\displaystyle
\eta(\tau)=q^{\frac{1}{24}} \prod_{n=1}^{\infty} (1-q^n) \,\,\mathrm{and} \,\,
\theta(\tau)=\sum_{n=-\infty}^{\infty}  e^{\pi i n^2 \tau}. 
\end{equation*}
\par
Throughout this paper, 
we adopt the definitions and the notations of modular groups and modular forms by Knopp \cite{Knopp}. 
We first define 
\begin{equation*}
\Gamma(1)=SL(2,\mathbb{Z}) \,\,
\mathrm{and} \,\,
\Gamma_{\theta}=
\left\{
\begin{pmatrix}
a & b \\
c & d
\end{pmatrix}
\in
\Gamma(1) \,\Big| \,
a\equiv d  \bmod{2} \,\,
\mathrm{and} \,\,
b \equiv c  \bmod{2} 
\right\}.
\end{equation*}
A {\it modular group} $\Gamma$ is a subgroup of finite index in $\Gamma(1).$ 
A {\it modular form} of weight $r$ is a function $F(\tau),$ 
defined and meromorphic in $\mathscr{H},$ 
which satisfies 
\begin{equation*}
F(M\tau)
=
\nu(M)
(c \tau+b)^r F(\tau), \,\,\tau\in \mathscr{H},
\end{equation*}
for all 
$
M
=
\begin{pmatrix}
a & b \\
c & d
\end{pmatrix}
\in \Gamma.
$
Here $\nu(M)$ is a complex number, independent of $\tau,$ such that $\,| \,\nu(M)\,| \,=1$ for all $M\in\Gamma.$ 
The function $\nu$ is called a  {\it multiplier system}. 
We note that $\nu$ is a character if the weight $r$ is a rational integer.  
\par
It is well known that 
$\eta(\tau)$ is a modular form of weight $\displaystyle 1/2$ on $\Gamma(1)$ and 
$\theta(\tau)$ is a modular form of weight $\displaystyle 1/2$ on $\Gamma_{\theta}.$
\par 
The aim of this paper is to determine the kernels $\Ker \nu$ for the modular froms of integral weight $\eta^{2k}(\tau)$ and $\theta^{2k}(\tau)$ with $k\in \mathbb{Z}.$ 
Sections \ref{sec:eta} and \ref{sec:theta} treat $\eta(\tau)$ and $\theta(\tau)$, respectively.


\section{The multiplier system of $\eta(\tau)$}
\label{sec:eta}

Knopp \cite[pp.51]{Knopp} proved that for each 
$
M
=
\begin{pmatrix}
a & b \\
c & d
\end{pmatrix}
\in
\Gamma(1),
$
\begin{equation*}
\nu_{\eta}(M)=
\begin{cases}
\displaystyle
\left(
\frac{d}{c}
\right)^{*}   
\exp
\left\{
\frac{\pi i}{12}
[
(a+d)c-bd(c^2-1)-3c
]
\right\} 
&
\text{if $c$ is odd,}
\vspace{2mm}  \\
\displaystyle
\left(
\frac{c}{d}
\right)_{*}
\exp
\left\{
\frac{\pi i}{12}
[
(a+d)c-bd(c^2-1)+3d-3-3cd
]
\right\} 
&
\text{if $c$ is even,}
\end{cases}
\end{equation*}
where 
$
\displaystyle
\left(
\frac{d}{c}
\right)^{*}=\pm 1 
$ 
and 
$
\displaystyle
\left(
\frac{c}{d}
\right)_{*}=\pm 1.
$ 
Therefore, it follows that for each $k\in \mathbb{Z},$ 
\begin{equation*}
\nu_{\eta^{2k}  }(M)=
\begin{cases}
\displaystyle
\exp
\left\{
\frac{k\pi i}{6}
[
(a+d)c-bd(c^2-1)-3c
]
\right\} 
&
\text{if $c$ is odd,}
\vspace{2mm}  \\
\displaystyle
\exp
\left\{
\frac{k \pi i}{6}
[
(a+d)c-bd(c^2-1)+3d-3-3cd
]
\right\} 
&
\text{if $c$ is even.}
\end{cases}
\end{equation*}
For each 
$
M
=
\begin{pmatrix}
a & b \\
c & d
\end{pmatrix}
\in
\Gamma(1),
$
we define 
\begin{equation*}
f(M)=
\begin{cases}
\displaystyle
(a+d)c-bd(c^2-1)-3c 
&
\text{if $c$ is odd,}
\vspace{2mm}  \\
\displaystyle
(a+d)c-bd(c^2-1)+3d-3-3cd 
&
\text{if $c$ is even.}
\end{cases}
\end{equation*}

\subsection{The case where $k\equiv 0 \bmod{12}$}

\begin{theorem}
{\it
Suppose that 
$k\equiv 0 \bmod{12}$. 
Then, it follows that 
\begin{equation*}
\Ker \nu_{\eta^{2k}}
=\Gamma(1). 
\end{equation*}
}
\end{theorem}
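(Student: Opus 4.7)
The plan is to reduce the claim to an elementary observation about the exponent in Knopp's formula. Write $k = 12m$ with $m \in \mathbb{Z}$, so that the scalar coefficient $\frac{k\pi i}{6}$ appearing in the expression for $\nu_{\eta^{2k}}(M)$ becomes $2m \pi i$. Hence for any $M \in \Gamma(1)$, regardless of the parity of $c$, we have
\begin{equation*}
\nu_{\eta^{2k}}(M) = \exp\!\left(2m\pi i \cdot f(M)\right).
\end{equation*}

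The only thing left to check is that $f(M) \in \mathbb{Z}$ for every $M \in \Gamma(1)$, which is immediate from its definition in both the odd-$c$ and even-$c$ cases: each summand $(a+d)c$, $bd(c^2-1)$, $3c$, $3d$, $3$, $3cd$ is a product of integers. Once this is observed, $2m \cdot f(M) \in \mathbb{Z}$, so $\exp(2m\pi i \cdot f(M)) = 1$. Therefore $M \in \Ker \nu_{\eta^{2k}}$ for every $M \in \Gamma(1)$, proving $\Ker \nu_{\eta^{2k}} = \Gamma(1)$.

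There is no serious obstacle: this case is essentially bookkeeping, reflecting the well-known fact that $\eta^{24}(\tau)$ is a (trivial-character) modular form of weight $12$ on the full modular group $\Gamma(1)$, and the result for general $k \equiv 0 \pmod{12}$ follows by taking the corresponding integer power. The only mild care required is to verify that the integrality of $f(M)$ holds uniformly in both branches of its piecewise definition, but this is automatic.
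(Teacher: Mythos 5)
Your proposal is correct and simply spells out the computation the paper dismisses with ``It is obvious'': writing $k=12m$ turns the exponent into $2m\pi i\, f(M)$ with $f(M)\in\mathbb{Z}$, so $\nu_{\eta^{2k}}(M)=1$ for every $M$. This is the same (and the only reasonable) argument, just made explicit.
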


\begin{proof}
It is obvious. 
\end{proof}

\subsection{The case where $k\equiv 6 \bmod{12}$}

\begin{theorem}
\label{thmk=6}
{\it
Suppose that 
$k\equiv 6 \bmod{12}$. 
Then, it follows that 
\begin{equation*}
\Ker \nu_{\eta^{2k}}
=
\left\{
\begin{pmatrix}
a & b \\
c & d
\end{pmatrix}
\in
\Gamma(1) \,\Big| \, 
\begin{pmatrix}
a & b \\
c & d
\end{pmatrix}
\equiv
\begin{pmatrix}
1 & 0 \\
0 & 1
\end{pmatrix}, 
\begin{pmatrix}
0 & 1 \\
1 & 1
\end{pmatrix},
\begin{pmatrix}
1 & 1 \\
1 & 0
\end{pmatrix}
\bmod{2}
\right\}.
\end{equation*}
Moreover, 
as a coset decomposition of $\Gamma(1)$ modulo $\Ker \nu_{\eta^{2k}}$,  
we may choose $S^{n}$ $(n=0,1),$ where 
$
S=
\begin{pmatrix}
1 & 1 \\
0 & 1
\end{pmatrix}. 
$
}
\end{theorem}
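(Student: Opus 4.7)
Since $k \equiv 6 \pmod{12}$, write $k = 6 + 12\ell$. Then
\[
\nu_{\eta^{2k}}(M) = \exp\!\left(\tfrac{k\pi i}{6} f(M)\right) = \exp\!\left(\pi i\, f(M)\right)\cdot \exp(2\pi i \ell f(M)) = \exp(\pi i\, f(M)),
\]
so $M \in \Ker \nu_{\eta^{2k}}$ if and only if $f(M) \equiv 0 \pmod{2}$. The plan is therefore to compute $f(M) \bmod 2$ in the two cases ($c$ odd and $c$ even), verify that the resulting condition depends only on the reduction $M \bmod 2$, enumerate the six elements of $SL(2, \mathbb{Z}/2\mathbb{Z})$, and pick out the three that satisfy it.

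First I would handle the case $c$ odd. Since $c^2 - 1$ is even, the term $bd(c^2-1)$ drops out mod $2$, and $(a+d)c \equiv a+d$, $-3c \equiv 1 \pmod 2$, so
\[
f(M) \equiv a + d + 1 \pmod 2.
\]
Thus the condition becomes $a + d \equiv 1 \pmod 2$. Next, for $c$ even, the terms $(a+d)c$ and $3cd$ vanish mod $2$, while $-bd(c^2 - 1) \equiv bd$, $3d \equiv d$, and $-3 \equiv 1$, giving
\[
f(M) \equiv bd + d + 1 \equiv d(b+1) + 1 \pmod 2.
\]
So the condition reads $d(b+1) \equiv 1 \pmod 2$, i.e.\ $d$ odd and $b$ even; note that since $c$ is even and $ad - bc = 1$, $d$ is automatically odd. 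In both cases the condition depends only on the parities of $a, b, c, d$.

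Next I would match these conditions against the six elements of $SL(2,\mathbb{Z}/2\mathbb{Z})$. Three of them, namely
\[
\begin{pmatrix}1 & 1 \\ 0 & 1\end{pmatrix}, \quad \begin{pmatrix}1 & 0 \\ 1 & 1\end{pmatrix}, \quad \begin{pmatrix}0 & 1 \\ 1 & 0\end{pmatrix},
\]
fail the conditions (the first has $c$ even with $b$ odd, and the latter two have $c$ odd with $a+d$ even). The remaining three are exactly
\[
\begin{pmatrix}1 & 0 \\ 0 & 1\end{pmatrix},\quad \begin{pmatrix}0 & 1 \\ 1 & 1\end{pmatrix},\quad \begin{pmatrix}1 & 1 \\ 1 & 0\end{pmatrix},
\]
which establishes the claimed description of $\Ker \nu_{\eta^{2k}}$.

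Finally, the three listed residues form a subgroup of $SL(2,\mathbb{Z}/2\mathbb{Z}) \cong S_3$ of order $3$ (a direct check of the three nontrivial products suffices), so $\Ker \nu_{\eta^{2k}}$ has index $2$ in $\Gamma(1)$. Since $S = \begin{pmatrix}1 & 1 \\ 0 & 1\end{pmatrix}$ reduces mod $2$ to a matrix outside this subgroup, the cosets $\{I, S\}$ form a complete set of representatives. The only real labor here is the parity bookkeeping in the two cases of $f(M)$; everything else is finite enumeration.
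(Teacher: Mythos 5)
Your proof is correct and takes essentially the same route as the paper: reduce $\nu_{\eta^{2k}}(M)=\exp(\pi i\,f(M))$ to the condition $f(M)\equiv 0 \bmod 2$, observe this depends only on $M \bmod 2$, and enumerate the six classes of $SL(2,\mathbb{Z}/2\mathbb{Z})$. Your explicit mod-2 simplification of $f$ in the two cases and the order-3-subgroup/index-2 argument for the coset representatives merely fill in details the paper leaves implicit.
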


\begin{proof}
For each 
$
M
=
\begin{pmatrix}
a & b \\
c & d
\end{pmatrix}
\in
\Gamma(1), 
$ 
we have 
\begin{equation*}
\nu_{\eta^{2k}  }(M)=
\exp
\{\pi i f(M) \}. 
\end{equation*}
The group $SL(2,\mathbb{Z} / 2\mathbb{Z})$ consists of 
\begin{equation*}
\begin{pmatrix}
\bar{1} & \bar{0} \\
\bar{0} & \bar{1}
\end{pmatrix}, 
\begin{pmatrix}
\bar{1} & \bar{1} \\
\bar{0} & \bar{1}
\end{pmatrix}, 
\begin{pmatrix}
\bar{0} & \bar{1} \\
\bar{1} & \bar{0}
\end{pmatrix}, 
\begin{pmatrix}
\bar{0} & \bar{1} \\
\bar{1} & \bar{1}
\end{pmatrix}, 
\begin{pmatrix}
\bar{1} & \bar{1} \\
\bar{1} & \bar{0}
\end{pmatrix}, 
\begin{pmatrix}
\bar{1} & \bar{0} \\
\bar{1} & \bar{1}
\end{pmatrix}. 
\end{equation*}
The theorem can be proved by substituting them into $f(M).$ 
\par
The coset decomposition can be obtained by considering the values of $\nu_{\eta^{2k}  }(M).$
\end{proof}

\subsection{The case where $k\equiv \pm3 \bmod{12}$}

\begin{lemma}
\label{lem:mod2-mod4}
{\it
Set 
$
M
=
\begin{pmatrix}
a & b \\
c & d
\end{pmatrix}
\in
\Gamma(1). 
$ 
\newline
$\mathrm{(1)}$
If 
$
M
\equiv
\begin{pmatrix}
1 & 0 \\
0 & 1
\end{pmatrix}
\bmod{2}, 
$ 
we have 
\begin{equation*}
M
\equiv
\pm
\begin{pmatrix}
1 & 0 \\
0 & 1
\end{pmatrix}, 
\pm
\begin{pmatrix}
1 & 0 \\
2 & 1
\end{pmatrix}, 
\pm
\begin{pmatrix}
1 & 2 \\
0 & 1
\end{pmatrix}, 
\pm
\begin{pmatrix}
1 & 2 \\
2 & 1
\end{pmatrix} 
\bmod{4}. 
\end{equation*}
$\mathrm{(2)}$ 
If 
$
M
\equiv
\begin{pmatrix}
0 & 1 \\
1 & 1
\end{pmatrix}
\bmod{2}, 
$ 
we have 
\begin{equation*}
M
\equiv
\pm
\begin{pmatrix}
2 & 1 \\
1 & 1
\end{pmatrix}, 
\pm
\begin{pmatrix}
2 & 1 \\
1 & -1
\end{pmatrix}, 
\pm
\begin{pmatrix}
0 & -1 \\
1 & 1
\end{pmatrix}, 
\pm
\begin{pmatrix}
0 & -1 \\
1 & -1
\end{pmatrix} 
\bmod{4}. 
\end{equation*}
$\mathrm{(3)}$ 
If 
$
M
\equiv
\begin{pmatrix}
1 & 1 \\
1 & 0
\end{pmatrix}
\bmod{2}, 
$ 
we have 
\begin{equation*}
M
\equiv
\pm
\begin{pmatrix}
1 & 1 \\
1 & 2
\end{pmatrix}, 
\pm
\begin{pmatrix}
-1 & 1 \\
1 & 2
\end{pmatrix}, 
\pm
\begin{pmatrix}
1 & -1 \\
1 & 0
\end{pmatrix}, 
\pm
\begin{pmatrix}
-1 & -1 \\
1 & 0
\end{pmatrix} 
\bmod{4}. 
\end{equation*}
}
\end{lemma}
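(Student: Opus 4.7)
The plan is to prove the lemma by a direct enumeration in $SL(2,\mathbb{Z}/4\mathbb{Z})$, streamlined by the following counting observation: since $|SL(2,\mathbb{Z}/4\mathbb{Z})|=48$ and $|SL(2,\mathbb{Z}/2\mathbb{Z})|=6$, each fiber of the (surjective) reduction map $SL(2,\mathbb{Z}/4\mathbb{Z})\to SL(2,\mathbb{Z}/2\mathbb{Z})$ consists of exactly $48/6=8$ elements. Each of the three lists in (1)--(3) displays four matrices together with their negatives, i.e.\ eight candidates, so it will suffice to verify that these eight candidates are pairwise distinct modulo $4$, have determinant $\equiv 1\bmod 4$, and reduce correctly modulo $2$. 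The last two conditions are immediate by inspection, so the bulk of the work is distinctness.

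For part (1), I would parameterise: the hypothesis $M\equiv\begin{pmatrix}1&0\\0&1\end{pmatrix}\bmod 2$ forces $a,d\in\{1,3\}$ and $b,c\in\{0,2\}$ modulo $4$. Because $bc\in\{0,4\}\equiv 0\bmod 4$ in every combination, the determinant relation collapses to $ad\equiv 1\bmod 4$, which forces $a\equiv d\bmod 4$. This yields exactly $2\cdot 2\cdot 2=8$ residue classes, which I would then match one-by-one against the four $\pm$ pairs of the statement. For part (2), I would similarly set $a\in\{0,2\}$ and $b,c,d\in\{1,3\}$ modulo $4$ and split on $a\bmod 4$: when $a\equiv 0$ the condition becomes $bc\equiv -1\bmod 4$, giving $(b,c)\equiv(1,3)$ or $(3,1)$; when $a\equiv 2$ it becomes $bc\equiv 2d-1\equiv 1\bmod 4$, giving $(b,c)\equiv(1,1)$ or $(3,3)$. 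The eight resulting classes coincide with the displayed $\pm$ pairs. Part (3) is handled by the symmetric case split, or alternatively may be derived from (2) by right-multiplication by a fixed element sending the mod-$2$ class of (2) to that of (3).

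The main difficulty is not mathematical but bookkeeping: for each displayed pair one must carefully compute the negative modulo $4$ (for example $-\begin{pmatrix}1&2\\0&1\end{pmatrix}\equiv\begin{pmatrix}3&2\\0&3\end{pmatrix}\bmod 4$) and confirm that it appears among the residues produced by the case split. The counting identity $48/6=8$ acts as a built-in safeguard: once eight distinct matrices in the correct fiber are exhibited, the fiber is automatically exhausted, so no case can have been missed. I do not anticipate any real obstacle beyond this organisational care.
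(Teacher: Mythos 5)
Your proposal is correct and follows essentially the same route as the paper: a direct enumeration of the residues of $(a,b,c,d)$ modulo $4$ constrained by $ad-bc\equiv 1\bmod 4$, split according to the mod-$2$ class. The only addition is the fiber-counting safeguard $|SL(2,\mathbb{Z}/4\mathbb{Z})|/|SL(2,\mathbb{Z}/2\mathbb{Z})|=48/6=8$, which the paper does not invoke but which is a harmless (and useful) consistency check.
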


\begin{proof}
{\bf Case (1).}
Since $ad-bc=1$ and 
$b\equiv d \equiv0 \bmod{2},$ 
it follows that $ad\equiv 1\bmod{4}$, 
which implies that 
\begin{equation*}
(a,d)\equiv (1,1), (-1,-1) \,\,\bmod{4}.
\end{equation*}
Since $b\equiv d\equiv 0 \bmod{2},$ it follows that 
\begin{equation*}
(b,d)\equiv
(0,0), (0,2), (2,0), (2,2) \bmod{4}, 
\end{equation*}
which proves the lemma. 
\newline
{\bf Case (2).}
Since $ad-bc=1$ and 
$a\equiv 0, \,d\equiv 1 \bmod{2},$ 
it follows that $bc\equiv \pm 1 \bmod{4},$ 
which implies that 
\begin{equation*}
(b,c)
\equiv
(1,1), (-1,-1), 
(1,-1), (-1,1) \bmod{4},
\end{equation*}
and
\begin{equation*}
(a,d)\equiv
(0,1), (0,-1), (2,1), (2,-1) \bmod{4}.
\end{equation*}
The lemma can be proved by considering $ad-bc=1.$ 
\newline
{\bf Case (3).}
Since $ad-bc=1$ and 
$a\equiv 1, \,d\equiv 0 \bmod{2},$ 
it follows that $bc\equiv \pm 1 \bmod{4},$ 
which implies that 
\begin{equation*}
(b,c)
\equiv
(1,1), (-1,-1), 
(1,-1), (-1,1) \bmod{4},
\end{equation*}
and
\begin{equation*}
(a,d)\equiv
(1,0), (1,2), (-1,0), (-1,2) \bmod{4}.
\end{equation*}
The lemma can be proved by considering $ad-bc=1.$ 
\end{proof}

\begin{theorem}
\label{thmk=3}
{\it
Suppose that 
$k\equiv \pm3 \bmod{12}$. 
Then, it follows that 
\begin{align*}
\Ker \nu_{\eta^{2k}}
=
\Biggl\{
\begin{pmatrix}
a & b \\
c & d
\end{pmatrix}
\in
\Gamma(1) \,\Big| \, 
\begin{pmatrix}
a & b \\
c & d
\end{pmatrix}
\equiv&
\begin{pmatrix}
1 & 0 \\
0 & 1
\end{pmatrix}, 
\begin{pmatrix}
1 & 2 \\
2 & 1
\end{pmatrix},
\begin{pmatrix}
-1 & 0 \\
2 & -1
\end{pmatrix}, 
\begin{pmatrix}
-1 & 2 \\
0 & -1
\end{pmatrix},  \\
&
\begin{pmatrix}
2 & 1 \\
1 & 1
\end{pmatrix}, 
\begin{pmatrix}
2 & -1 \\
-1 & 1
\end{pmatrix}, 
\begin{pmatrix}
0 & 1 \\
-1 & -1
\end{pmatrix}, 
\begin{pmatrix}
0 & -1 \\
1 & -1
\end{pmatrix}, \\
&
\begin{pmatrix}
1 & 1 \\
1 & 2
\end{pmatrix}, 
\begin{pmatrix}
1 & -1 \\
-1 & 2
\end{pmatrix}, 
\begin{pmatrix}
-1 & 1 \\
-1 & 0
\end{pmatrix}, 
\begin{pmatrix}
-1 & -1 \\
1 & 0
\end{pmatrix}
\bmod{4}
\Biggr\}.
\end{align*}
Moreover, 
as a coset decomposition of $\Gamma(1)$ modulo $\Ker \nu_{\eta^{2k}}$,  
we may choose 
\begin{equation*}
S^{n}  \,\, 
(0\leq n \leq3),  \,\,
S=
\begin{pmatrix}
1 & 1 \\
0 & 1
\end{pmatrix}. 
\end{equation*}
}
\end{theorem}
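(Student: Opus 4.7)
The plan is to reduce the condition $\nu_{\eta^{2k}}(M) = 1$ to a divisibility condition on $f(M)$, then run a finite case analysis using Lemma~\ref{lem:mod2-mod4}. Since $\nu_{\eta^{2k}}(M) = \exp(k\pi i\,f(M)/6)$, the kernel condition amounts to $12 \mid k\,f(M)$. When $k \equiv \pm 3 \bmod 12$ we have $\gcd(k,12) = 3$, so this simplifies to $4 \mid f(M)$. In particular $f(M)$ must be even, which by the computation underlying Theorem~\ref{thmk=6} already forces $M \bmod 2$ to lie in one of the three classes listed there.

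Before running the case analysis I would verify that $f(M) \bmod 4$ depends only on $M \bmod 4$: in both branches of the piecewise definition of $f$, replacing any of $a,b,c,d$ by itself plus $4$ changes $f(M)$ by a multiple of $4$, as a term-by-term inspection confirms (and the parity of $c$ is preserved, so one stays in the same branch). This reduces the problem to the $24$ residue classes modulo $4$ enumerated in Lemma~\ref{lem:mod2-mod4} (eight per mod-$2$ class, counting the $\pm$ sign). One then evaluates $f(M) \bmod 4$ on each representative using the appropriate branch; the claim is that exactly the $12$ matrices listed in the theorem give $f(M) \equiv 0 \bmod 4$, while the remaining $12$ give $f(M) \equiv 2 \bmod 4$. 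Within each mod-$2$ class only one branch of $f$ is used, which keeps the bookkeeping manageable.

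For the coset decomposition, a direct calculation gives $S^n = \begin{pmatrix} 1 & n \\ 0 & 1 \end{pmatrix}$ with $c = 0$ and $f(S^n) = n$, hence $\nu_{\eta^{2k}}(S^n) = \exp(k n \pi i / 6)$. For $k \equiv \pm 3 \bmod 12$, these values run over the four distinct fourth roots of unity as $n = 0, 1, 2, 3$, so the $S^n$ lie in four distinct cosets of $\Ker \nu_{\eta^{2k}}$. Combined with $|SL(2,\mathbb{Z}/4\mathbb{Z})| = 48$ and the fact that the kernel equals the preimage in $\Gamma(1)$ of the $12$ listed residue classes, we obtain $[\Gamma(1):\Ker\nu_{\eta^{2k}}] = 48/12 = 4$, so $\{S^0, S^1, S^2, S^3\}$ is a complete set of coset representatives.

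The main obstacle is the bulk of the middle step: twenty-four separate evaluations of a piecewise formula, with the $\pm$ signs from Lemma~\ref{lem:mod2-mod4} adding an extra layer of bookkeeping. Each individual calculation is routine, but care is needed so that no residue class is overlooked and the signs are handled consistently.
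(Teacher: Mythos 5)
Your proposal is correct and follows essentially the same route as the paper: reduce the kernel condition to $f(M)\equiv 0 \bmod 4$, invoke Theorem~\ref{thmk=6} to cut down to the three mod-$2$ classes, evaluate $f$ on the mod-$4$ representatives from Lemma~\ref{lem:mod2-mod4}, and read off the coset representatives from the values $\nu_{\eta^{2k}}(S^n)$. Your explicit check that $f(M)\bmod 4$ depends only on $M\bmod 4$ is a detail the paper leaves implicit (and is genuinely needed to make sense of substituting residue classes into $f$), but it does not change the approach.
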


\begin{proof}
For each 
$
M
=
\begin{pmatrix}
a & b \\
c & d
\end{pmatrix}
\in
\Gamma(1), 
$ 
we have 
\begin{equation*}
\nu_{\eta^{2k}  }(M)=
\exp
\left\{
\pm
\frac{
\pi i
}
{2} 
f(M) 
\right\},
\end{equation*}
which implies that 
\begin{equation*}
M
\in
\Ker \nu_{\eta^{2k}}
\Longleftrightarrow 
f(M)\equiv 0 \bmod{4}. 
\end{equation*}
If $f(M)\equiv 0\bmod{2},$  
by Theorem \ref{thmk=6}, 
we have 
\begin{equation*}
M
\equiv
\begin{pmatrix}
1 & 0 \\
0 & 1
\end{pmatrix}, 
\begin{pmatrix}
0 & 1 \\
1 & 1
\end{pmatrix},
\begin{pmatrix}
1 & 1 \\
1 & 0
\end{pmatrix}
\bmod{2}.
\end{equation*}
The theorem can be proved by substituting the matrices in Lemma \ref{lem:mod2-mod4} into $f(M).$ 
\par
The coset decomposition can be obtained by considering the values of $\nu_{\eta^{2k}  }(M).$
\end{proof}

\subsection{The case where $k\equiv \pm 4 \bmod{12}$}

\begin{theorem}
\label{thmk=4}
{\it
Suppose that 
$k\equiv \pm4 \bmod{12}$. 
Then, it follows that 
\begin{align*}
\Ker \nu_{\eta^{2k}}
=
\Biggl\{
\begin{pmatrix}
a & b \\
c & d
\end{pmatrix}
\in
\Gamma(1) \,\Big| \, 
\begin{pmatrix}
a & b \\
c & d
\end{pmatrix}
\equiv&
\pm
\begin{pmatrix}
1 & 0 \\
0 & 1
\end{pmatrix}, 
\pm
\begin{pmatrix}
0 & -1 \\
1 & 0
\end{pmatrix},
\pm
\begin{pmatrix}
1 & 1 \\
1 & -1
\end{pmatrix}, 
\pm
\begin{pmatrix}
-1 & 1 \\
1 & 1
\end{pmatrix}
\bmod{3}
\Biggr\}.
\end{align*}
Moreover, 
as a coset decomposition of $\Gamma(1)$ modulo $\Ker \nu_{\eta^{2k}}$,  
we may choose 
\begin{equation*}
S^{n}  \,\, 
(0\leq n \leq2),  \,\,
S=
\begin{pmatrix}
1 & 1 \\
0 & 1
\end{pmatrix}. 
\end{equation*}
}
\end{theorem}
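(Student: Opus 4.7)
The plan mirrors that of Theorem \ref{thmk=3}. Since $k\equiv \pm 4 \bmod{12}$, the coefficient $k\pi i/6$ in the exponent of $\nu_{\eta^{2k}}$ reduces modulo $2\pi i$ to $\pm 2\pi i/3$, giving
\begin{equation*}
\nu_{\eta^{2k}}(M) = \exp\!\left\{\pm \frac{2\pi i}{3} f(M)\right\},
\end{equation*}
so $M \in \Ker \nu_{\eta^{2k}} \Longleftrightarrow f(M) \equiv 0 \bmod{3}$. The first useful observation is that the case split in the definition of $f(M)$ disappears modulo $3$: the extra term $-3c$ (when $c$ is odd) and the extra terms $3d-3-3cd$ (when $c$ is even) are all divisible by $3$, so in either case
\begin{equation*}
f(M) \equiv (a+d)c - bd(c^2-1) \bmod{3}.
\end{equation*}
Hence membership in $\Ker \nu_{\eta^{2k}}$ depends only on the image of $M$ in $SL(2,\mathbb{Z}/3\mathbb{Z})$, avoiding the mod-$2$/mod-$4$ refinement used in Theorem \ref{thmk=3}.

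Next I would enumerate the $24$ elements of $SL(2,\mathbb{Z}/3\mathbb{Z})$ and evaluate $(a+d)c - bd(c^2-1) \bmod{3}$ on each class. Since $-I$ obviously lies in the kernel, the enumeration organizes naturally into signed pairs $\pm M$, which halves the work; I expect to find exactly $4$ unordered pairs (i.e., $8$ residue classes) with value $0$, matching the $8$ matrices displayed in the statement. This is essentially the only obstacle: a careful, exhaustive bookkeeping of the $24$ cases with no residue class omitted or duplicated. Conceptually the step is routine, since every term in the formula is polynomial in the entries of $M$.

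For the coset decomposition, $S = \begin{pmatrix} 1 & 1 \\ 0 & 1 \end{pmatrix}$ has $c = 0$ (even), and a direct substitution into the $c$-even branch of $f$ yields $f(S^n) = n$. Therefore $\nu_{\eta^{2k}}(S^n) = \exp(\pm 2\pi i n/3)$ takes three distinct values as $n$ runs over $\{0,1,2\}$, so $I$, $S$, $S^2$ lie in distinct cosets of $\Ker \nu_{\eta^{2k}}$. Since the image of $\nu_{\eta^{2k}}$ is contained in the group of cube roots of unity, the index $[\Gamma(1) : \Ker \nu_{\eta^{2k}}]$ is exactly $3$, and $\{I, S, S^2\}$ is a complete set of representatives.
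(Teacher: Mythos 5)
Your proposal is correct and follows the same core reduction as the paper: both begin from $\nu_{\eta^{2k}}(M)=\exp\{\pm\tfrac{2\pi i}{3}f(M)\}$ and the equivalence $M\in\Ker\nu_{\eta^{2k}}\Leftrightarrow f(M)\equiv 0\bmod 3$. Your explicit observation that the two branches of $f$ agree modulo $3$ (since $-3c$ and $3d-3-3cd$ both vanish), so that membership depends only on the image of $M$ in $SL(2,\mathbb{Z}/3\mathbb{Z})$, is a point the paper uses only tacitly, and it is worth stating. Where you diverge is in how the eight residue classes are extracted: you propose a brute-force sweep over all $24$ elements of $SL(2,\mathbb{Z}/3\mathbb{Z})$, organized into $\pm$-pairs, whereas the paper runs a structured case analysis --- if $c\equiv 0\bmod 3$ then $f(M)\equiv bd$ forces $b\equiv 0$ and $M\equiv\pm I$; if $c\not\equiv 0$ then $c^2\equiv 1$ gives $f(M)\equiv(a+d)c$, forcing $a+d\equiv 0$, and the three subcases $(a,d)\equiv(0,0),(1,-1),(-1,1)$ produce the remaining six classes. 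The paper's route derives the list as a set of necessary conditions and then confirms sufficiency by direct calculation; yours is purely exhaustive. Both work, but note that you only announce the outcome of the enumeration (``I expect to find exactly $4$ unordered pairs'') rather than carry it out, so to be complete you would still need to record the $24$ evaluations or adopt the paper's shortcut. Your coset argument ($f(S^n)=n$, image contained in the cube roots of unity, hence index exactly $3$) is correct and in fact more detailed than the paper's one-line remark.
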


\begin{proof}
For each 
$
M
=
\begin{pmatrix}
a & b \\
c & d
\end{pmatrix}
\in
\Gamma(1), 
$ 
we have 
\begin{equation*}
\nu_{\eta^{2k}  }(M)=
\exp
\left\{
\pm
\frac{
2 \pi i
}
{3} 
f(M) 
\right\},
\end{equation*}
which implies that 
\begin{equation*}
M
\in
\Ker \nu_{\eta^{2k}}
\Longleftrightarrow 
f(M)\equiv 0 \bmod{3}. 
\end{equation*}
\par
If $f(M)\equiv 0\bmod{3}$ and $c\equiv 0 \bmod{3},$   
it follows that $bd\equiv 0 \bmod{3}.$ 
Since $ad-bc=1,$ we have $b\equiv 0 \bmod{3},$ 
which implies that  
\begin{equation*}
M
\equiv 
\pm 
\begin{pmatrix}
1 & 0 \\
0 & 1
\end{pmatrix} 
\bmod{3}.
\end{equation*}
\par
If $f(M)\equiv 0\bmod{3}$ and $c\not\equiv 0 \bmod{3},$ 
it follows that $a+d\equiv 0 \bmod{3},$ 
which implies that 
\begin{equation*}
(a,d)\equiv (0,0), (1,-1), (-1,1) \bmod{3}.
\end{equation*}
When $(a,d)\equiv(0,0) \bmod{3},$ 
it follows that $bc\equiv -1 \bmod{3},$ 
which implies that 
\begin{equation*}
M
\equiv 
\pm 
\begin{pmatrix}
0 & -1 \\
1 & 0
\end{pmatrix} 
\bmod{3}.
\end{equation*}
When $(a,d)\equiv(1,-1) \bmod{3},$ 
it follows that $bc\equiv 1 \bmod{3},$ 
which implies that 
\begin{equation*}
M
\equiv 
\begin{pmatrix}
1 & 1 \\
1 & -1
\end{pmatrix} , 
\begin{pmatrix}
1 & -1 \\
-1 & -1
\end{pmatrix} 
\bmod{3}.
\end{equation*}
When $(a,d)\equiv(-1,1) \bmod{3},$ 
it follows that $bc\equiv 1 \bmod{3},$ 
which implies that 
\begin{equation*}
M
\equiv 
\begin{pmatrix}
-1 & 1 \\
1 & 1
\end{pmatrix} , 
\begin{pmatrix}
-1 & -1 \\
-1 & 1
\end{pmatrix} 
\bmod{3}.
\end{equation*}
Direct calculation shows that 
the eight kinds of matrices satisfy $f(M)\equiv 0 \bmod{3}.$ 
\par
The coset decomposition can be obtained by considering the values of $\nu_{\eta^{2k}  }(M).$
\end{proof}

\subsection{The case where $k\equiv \pm 2 \bmod{12}$}

\begin{theorem}
\label{thmk=2}
{\it
Suppose that 
$k\equiv \pm2 \bmod{12}$. 
Then, it follows that 
\begin{align*}
\Ker \nu_{\eta^{2k}}
=
\Ker 
\nu_{ \eta^{12} }
\cap
\Ker \nu_{ \eta^8 }. 
\end{align*}
Moreover, 
as a coset decomposition of $\Gamma(1)$ modulo $\Ker \nu_{\eta^{2k}}$,  
we may choose 
\begin{equation*}
S^{n}  \,\, 
(0\leq n \leq5),  \,\,
S=
\begin{pmatrix}
1 & 1 \\
0 & 1
\end{pmatrix}. 
\end{equation*}
}
\end{theorem}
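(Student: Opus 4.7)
The plan is to rewrite the kernel condition for $\nu_{\eta^{2k}}$ with $k\equiv \pm2\bmod{12}$ as a single divisibility condition $f(M)\equiv 0\bmod{6}$, then split this by the Chinese Remainder Theorem into congruences modulo $2$ and modulo $3$. These two pieces match exactly with the defining conditions of $\Ker\nu_{\eta^{12}}$ (the $k\equiv 6\bmod{12}$ case of Theorem \ref{thmk=6}) and $\Ker\nu_{\eta^{8}}$ (the $k\equiv 4\bmod{12}$ case of Theorem \ref{thmk=4}), which delivers the asserted intersection formula.

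Concretely, I would write $k=\pm 2+12j$ and observe that $\frac{k\pi i}{6}=\pm\frac{\pi i}{3}+2\pi i j$, so because $f(M)\in\mathbb{Z}$ we may simplify
\begin{equation*}
\nu_{\eta^{2k}}(M)=\exp\left\{\pm\frac{\pi i}{3}f(M)\right\}.
\end{equation*}
Hence $M\in\Ker\nu_{\eta^{2k}}$ iff $f(M)\equiv 0\bmod{6}$, i.e.\ iff $f(M)\equiv 0\bmod{2}$ and $f(M)\equiv 0\bmod{3}$. The first congruence is precisely $\nu_{\eta^{12}}(M)=\exp\{\pi i f(M)\}=1$, and the second is $\nu_{\eta^{8}}(M)=\exp\{\frac{2\pi i}{3}f(M)\}=1$, so the conjunction is exactly membership in $\Ker\nu_{\eta^{12}}\cap\Ker\nu_{\eta^{8}}$.

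For the coset decomposition, I would directly compute $f(S)$ for $S=\begin{pmatrix}1&1\\0&1\end{pmatrix}$: here $c=0$ is even, so the second branch of the definition gives
\begin{equation*}
f(S)=(1+1)\cdot 0 - 1\cdot 1\cdot (0-1) + 3\cdot 1 - 3 - 3\cdot 0\cdot 1 = 1.
\end{equation*}
Therefore $\nu_{\eta^{2k}}(S)=\exp(\pm\pi i/3)$ is a primitive sixth root of unity. Because $k$ is even, $\nu_{\eta^{2k}}^{6}$ is trivial, so the image of the character $\nu_{\eta^{2k}}$ is a subgroup of the group of sixth roots of unity that contains a primitive element, hence is the full group of order $6$. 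Consequently $[\Gamma(1):\Ker\nu_{\eta^{2k}}]=6$, and $S^{n}$ for $0\leq n\leq 5$ represent the six cosets since their images $\exp(\pm n\pi i/3)$ under $\nu_{\eta^{2k}}$ are pairwise distinct.

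The only real obstacle is confirming that the index equals exactly $6$, not some proper divisor; this boils down to verifying that $S$ has order $6$ in the image of the character, which is settled by the single computation $f(S)=1$. Once that is in hand, everything else reduces to quoting Theorems \ref{thmk=6} and \ref{thmk=4} and applying the Chinese Remainder Theorem to $6=2\cdot 3$.
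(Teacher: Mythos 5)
Your argument is correct and follows essentially the same route as the paper: reduce membership in the kernel to $f(M)\equiv 0\bmod 6$, split via $6=2\cdot 3$ into the conditions $f(M)\equiv 0\bmod 2$ and $f(M)\equiv 0\bmod 3$, and quote Theorems \ref{thmk=6} and \ref{thmk=4}. For the coset decomposition the paper only says it follows ``by considering the values of $\nu_{\eta^{2k}}(M)$,'' whereas you make this explicit via $f(S)=1$ and the fact that $\nu_{\eta^{2k}}(S)=\exp(\pm\pi i/3)$ is a primitive sixth root of unity; that computation is correct and is exactly the missing detail.
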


\begin{proof}
For each 
$
M
=
\begin{pmatrix}
a & b \\
c & d
\end{pmatrix}
\in
\Gamma(1), 
$ 
we have 
\begin{equation*}
\nu_{\eta^{2k}  }(M)=
\exp
\left\{
\pm
\frac{
 \pi i
}
{3} 
f(M) 
\right\},
\end{equation*}
which implies that 
\begin{equation*}
M
\in
\Ker \nu_{\eta^{2k}}
\Longleftrightarrow 
f(M)\equiv 0 \bmod{6}
\Longleftrightarrow 
f(M)\equiv 0 \bmod{2} \,\,
\mathrm{and} \,\,
f(M)\equiv 0 \bmod{3}. 
\end{equation*}
The theorem follows from Theorems \ref{thmk=6} and \ref{thmk=4}. 
\par
The coset decomposition can be obtained by considering the values of $\nu_{\eta^{2k}  }(M).$
\end{proof}

\subsection{The case where $k\equiv \pm 1, \pm 5 \bmod{12}$}

\begin{theorem}
\label{thmk=1and5}
{\it
Suppose that 
$k\equiv \pm1, \pm 5 \bmod{12}$. 
Then, it follows that 
\begin{align*}
\Ker \nu_{\eta^{2k}}
=
\Ker 
\nu_{ \eta^{6} }
\cap
\Ker \nu_{ \eta^8 }. 
\end{align*}
Moreover, 
as a coset decomposition of $\Gamma(1)$ modulo $\Ker \nu_{\eta^{2k}}$,  
we may choose 
\begin{equation*}
S^{n}  \,\, 
(0\leq n \leq11),  \,\,
S=
\begin{pmatrix}
1 & 1 \\
0 & 1
\end{pmatrix}. 
\end{equation*}
}
\end{theorem}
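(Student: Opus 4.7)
The plan is to mirror the template of Theorems \ref{thmk=6}--\ref{thmk=2}: translate the kernel condition into a congruence on $f(M)$, split that congruence by the Chinese remainder theorem, and recognize the two pieces as $\Ker \nu_{\eta^{6}}$ and $\Ker \nu_{\eta^{8}}$. The hypothesis $k\equiv\pm 1,\pm 5\bmod{12}$ is equivalent to $\gcd(k,12)=1$, and this is the only new arithmetic input needed.

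First I would start from the general formula
\begin{equation*}
\nu_{\eta^{2k}}(M)=\exp\left\{\frac{k\pi i}{6}f(M)\right\},
\end{equation*}
so $M\in\Ker\nu_{\eta^{2k}}$ is equivalent to $kf(M)\equiv 0\bmod{12}$. Since $\gcd(k,12)=1$, this reduces to $f(M)\equiv 0\bmod{12}$, which by the Chinese remainder theorem is equivalent to the conjunction $f(M)\equiv 0\bmod 4$ and $f(M)\equiv 0\bmod 3$.

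Next I would identify the two pieces with the earlier kernels. Specializing the formula to $k=3$ gives $\nu_{\eta^{6}}(M)=\exp\{(\pi i/2)f(M)\}$, so the mod-$4$ condition is exactly $M\in\Ker\nu_{\eta^{6}}$ (this is what was used inside the proof of Theorem \ref{thmk=3}). Specializing to $k=4$ gives $\nu_{\eta^{8}}(M)=\exp\{(2\pi i/3)f(M)\}$, so the mod-$3$ condition is exactly $M\in\Ker\nu_{\eta^{8}}$ (as used in the proof of Theorem \ref{thmk=4}). Intersecting yields $\Ker\nu_{\eta^{2k}}=\Ker\nu_{\eta^{6}}\cap\Ker\nu_{\eta^{8}}$.

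For the coset decomposition, the image of $\nu_{\eta^{2k}}$ lies in $\mu_{12}$ because $f(M)$ is an integer. A direct evaluation using the even-$c$ branch of the definition of $f$ gives $f(S)=1$ for $S=\begin{pmatrix}1&1\\0&1\end{pmatrix}$, hence $\nu_{\eta^{2k}}(S)=\exp(k\pi i/6)$, which is a \emph{primitive} $12$-th root of unity since $\gcd(k,12)=1$. Therefore the image equals $\mu_{12}$, so $[\Gamma(1):\Ker\nu_{\eta^{2k}}]=12$, and the twelve values $\nu_{\eta^{2k}}(S^{n})=\exp(nk\pi i/6)$ for $0\leq n\leq 11$ are distinct, giving a complete system of coset representatives. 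The main obstacle is really just the coprimality-plus-CRT reduction, which is quick; everything else is quoted from the preceding theorems, with the one numerical check $f(S)=1$ anchoring the primitivity of $\nu_{\eta^{2k}}(S)$.
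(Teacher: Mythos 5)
Your proposal is correct and follows essentially the same route as the paper: reduce $M\in\Ker\nu_{\eta^{2k}}$ to $f(M)\equiv 0\bmod{12}$ using $\gcd(k,12)=1$, split into the mod-$4$ and mod-$3$ conditions, and identify these with $\Ker\nu_{\eta^{6}}$ and $\Ker\nu_{\eta^{8}}$ via Theorems \ref{thmk=3} and \ref{thmk=4}. Your explicit computation $f(S)=1$ and the primitivity argument for the coset representatives is a welcome elaboration of the paper's terse ``by considering the values of $\nu_{\eta^{2k}}(M)$.''
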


\begin{proof}
For each 
$
M
=
\begin{pmatrix}
a & b \\
c & d
\end{pmatrix}
\in
\Gamma(1), 
$ 
we have 
\begin{equation*}
\nu_{\eta^{2k}  }(M)=
\exp
\left\{
\pm
\frac{
k
 \pi i
}
{6} 
f(M) 
\right\}, \,\,
(k,6)=1, 
\end{equation*}
which implies that 
\begin{equation*}
M
\in
\Ker \nu_{\eta^{2k}}
\Longleftrightarrow 
f(M)\equiv 0 \bmod{12}
\Longleftrightarrow 
f(M)\equiv 0 \bmod{4} \,\,
\mathrm{and} \,\,
f(M)\equiv 0 \bmod{3}. 
\end{equation*}
The theorem follows from Theorems \ref{thmk=3} and \ref{thmk=4}. 
\par
The coset decomposition can be obtained by considering the values of $\nu_{\eta^{2k}  }(M).$
\end{proof}

\section{The multiplier system of $\theta(\tau)$}
\label{sec:theta}

Knopp \cite[pp.51]{Knopp} proved that for each 
$
M
=
\begin{pmatrix}
a & b \\
c & d
\end{pmatrix}
\in
\Gamma_{\theta},
$
\begin{equation*}
\nu_{\theta}(M)=
\begin{cases}
\displaystyle
\left(
\frac{d}{c}
\right)^{*}   
e^{
-\pi i c/4
}
&
\text{if $b\equiv c\equiv 1,$ $a\equiv d\equiv 0 \,\, \bmod{2}$,}
\vspace{2mm}  \\
\displaystyle
\left(
\frac{c}{d}
\right)_{*}
e^{\pi i (d-1)/4} 
&
\text{if $a\equiv d\equiv 1,$ $b\equiv c\equiv 0 \,\, \bmod{2}$. }
\end{cases}
\end{equation*}
where 
$
\displaystyle
\left(
\frac{d}{c}
\right)^{*}=\pm 1 
$ 
and 
$
\displaystyle
\left(
\frac{c}{d}
\right)_{*}=\pm 1.
$ 
Therefore, it follows that for each $k\in \mathbb{Z},$ 
\begin{equation*}
\nu_{\theta^{2k}  }(M)=
\begin{cases}
\displaystyle
e^{
-k\pi i c/2
}
&
\text{if $b\equiv c\equiv 1,$ $a\equiv d\equiv 0 \,\, \bmod{2}$,}
\vspace{2mm}  \\
\displaystyle
e^{k \pi i (d-1)/2} 
&
\text{if $a\equiv d\equiv 1,$ $b\equiv c\equiv 0 \,\, \bmod{2}$. }
\end{cases}
\end{equation*}
For each 
$
M
=
\begin{pmatrix}
a & b \\
c & d
\end{pmatrix}
\in
\Gamma_{\theta},
$
we define 
\begin{equation*}
g(M)=
\begin{cases}
-c
&
\text{if $b\equiv c\equiv 1,$ $a\equiv d\equiv 0 \,\, \bmod{2}$,}
\vspace{2mm}  \\
d-1
&
\text{if $a\equiv d\equiv 1,$ $b\equiv c\equiv 0 \,\, \bmod{2}$. }
\end{cases}
\end{equation*}

\subsection{The case where $k\equiv 0 \bmod{4}$}

\begin{theorem}
{\it
Suppose that 
$k\equiv 0 \bmod{4}$. 
Then, it follows that 
\begin{equation*}
\Ker \nu_{\theta^{2k}}
=\Gamma_{\theta}. 
\end{equation*}
}
\end{theorem}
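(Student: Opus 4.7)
The plan is to observe that the statement is essentially immediate from the explicit formula for $\nu_{\theta^{2k}}(M)$ displayed just above the theorem. Since $\Ker\nu_{\theta^{2k}}\subseteq\Gamma_{\theta}$ by definition, only the reverse inclusion requires argument, and this reduces to showing $\nu_{\theta^{2k}}(M)=1$ for every $M\in\Gamma_{\theta}$.

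I would write $k=4m$ for some $m\in\mathbb{Z}$ and then split into the two cases present in the piecewise definition of $\nu_{\theta^{2k}}$. In the first case, where $b\equiv c\equiv 1$ and $a\equiv d\equiv 0\bmod 2$, the exponent $-k\pi i c/2$ becomes $-2m\pi i c$, which is an integer multiple of $2\pi i$, so the value is $1$. In the second case, where $a\equiv d\equiv 1$ and $b\equiv c\equiv 0\bmod 2$, the exponent $k\pi i(d-1)/2$ becomes $2m\pi i(d-1)$, again an integer multiple of $2\pi i$, so the value is $1$.

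There is no real obstacle here; the only thing to be careful about is distinguishing the two cases of $\Gamma_{\theta}$ and noting that the factor of $2$ in the denominator of the exponent is exactly absorbed by the divisibility of $k$ by $4$ (rather than just by $2$). The proof can be stated in two lines, in parallel with the author's trivial treatment of the case $k\equiv 0\bmod 12$ for $\eta^{2k}$.
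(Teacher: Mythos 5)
Your proposal is correct and matches the paper's approach: the paper simply declares the result ``obvious,'' and your two-case computation (writing $k=4m$ and checking that both exponents $-2m\pi i c$ and $2m\pi i(d-1)$ are integer multiples of $2\pi i$) is exactly the routine verification being left implicit. Nothing further is needed.
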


\begin{proof}
It is obvious. 
\end{proof}

\subsection{The case where $k\equiv 2 \bmod{4}$}

\begin{theorem}
{\it
Suppose that 
$k\equiv 2 \bmod{4}$. 
Then, it follows that 
\begin{equation*}
\Ker \nu_{\theta^{2k}}
=
\left\{
\begin{pmatrix}
a & b \\
c & d
\end{pmatrix}
\in
\Gamma(1) \,\Big| \, 
\begin{pmatrix}
a & b \\
c & d
\end{pmatrix}
\equiv
\begin{pmatrix}
1 & 0 \\
0 & 1
\end{pmatrix}
\bmod{2}
\right\}.
\end{equation*}
Moreover, 
as a coset decomposition of $\Gamma_{\theta}$ modulo $\Ker \nu_{\eta^{2k}}$,  
we may choose 
$
I   
$ 
and 
$
T, 
$ 
where 
$
I
=
\begin{pmatrix}
1 & 0 \\
0 & 1
\end{pmatrix}
$ 
and 
$
T
=
\begin{pmatrix}
0 & -1 \\
1 & 0
\end{pmatrix}.
$
}
\end{theorem}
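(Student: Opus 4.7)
The plan is to mimic the strategy used for the $\eta$-case: rewrite $\nu_{\theta^{2k}}(M)$ as a root of unity whose order divides a small integer, translate membership in the kernel into a congruence on $g(M)$, and then test the two parity types in the definition of $\Gamma_{\theta}$ against that congruence.

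First I would write $k = 4m+2$ for some $m \in \mathbb{Z}$, so that $k/2 = 2m+1$ is odd, and conclude from the formula for $\nu_{\theta^{2k}}$ that for every $M \in \Gamma_{\theta}$,
\[
\nu_{\theta^{2k}}(M) = \exp\!\left\{\tfrac{k \pi i}{2} g(M)\right\} = \exp\{(2m+1)\pi i\, g(M)\} = (-1)^{g(M)}.
\]
Hence $M \in \Ker \nu_{\theta^{2k}}$ if and only if $g(M) \equiv 0 \bmod 2$.

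Next I would split according to the two cases in the definition of $g(M)$. In the case $b \equiv c \equiv 1$, $a \equiv d \equiv 0 \bmod 2$, we have $g(M) = -c$ with $c$ odd, so $g(M)$ is odd and such an $M$ never lies in the kernel. In the case $a \equiv d \equiv 1$, $b \equiv c \equiv 0 \bmod 2$, we have $g(M) = d-1$ with $d$ odd, so $g(M)$ is automatically even and every such $M$ lies in the kernel. This identifies $\Ker \nu_{\theta^{2k}}$ exactly with those $M \in \Gamma_{\theta}$ satisfying $M \equiv I \bmod 2$, which is the asserted description (and coincides with $\Gamma(2)$, automatically a subgroup of $\Gamma_{\theta}$).

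Finally, for the coset decomposition of $\Gamma_{\theta}$ modulo this kernel, I would observe that the two parity types that define $\Gamma_{\theta}$ give precisely two cosets, represented by $I$ (the Case~2 type) and by any Case~1 element. The matrix $T = \begin{pmatrix} 0 & -1 \\ 1 & 0 \end{pmatrix}$ lies in $\Gamma_{\theta}$ and reduces mod $2$ to a Case~1 matrix, so $\{I, T\}$ is a complete set of coset representatives; since $\nu_{\theta^{2k}}(I) = 1$ while $\nu_{\theta^{2k}}(T) = (-1)^{g(T)} = (-1)^{-1} = -1$, they lie in different cosets, confirming the decomposition. There is no substantive obstacle here: the argument is a short parity bookkeeping once the formula $\nu_{\theta^{2k}}(M) = (-1)^{g(M)}$ is in place, which is the only step that really uses the hypothesis $k \equiv 2 \bmod 4$.
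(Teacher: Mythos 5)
Your proposal is correct and follows essentially the same route as the paper: both reduce membership in the kernel to the congruence $g(M)\equiv 0\bmod 2$, which the paper phrases as $d\equiv 1\bmod 2$ and you phrase as the two-case parity check on $-c$ and $d-1$ — these are the same observation. Your treatment of the coset representatives $I$ and $T$ is also just a slightly more explicit version of the paper's remark that the decomposition follows from the values of $\nu_{\theta^{2k}}(M)$.
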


\begin{proof}
For each 
$
M
=
\begin{pmatrix}
a & b \\
c & d
\end{pmatrix}
\in
\Gamma_{\theta},
$ 
we have 
\begin{equation*}
\nu_{\theta^{2k}  }(M)=
\exp
\left\{
\pi i
g(M) 
\right\},
\end{equation*}
which implies that 
\begin{equation*}
M
\in
\Ker \nu_{\theta^{2k}}
\Longleftrightarrow 
g(M)\equiv 0 \bmod{2} 
\Longleftrightarrow 
d \equiv 1 \bmod{2}. 
\end{equation*}
The theorem follwos from the definition of $\Gamma_{\theta}.$ 
\par
The coset decomposition can be obtained by considering the values of $\nu_{\theta^{2k}  }(M).$
\end{proof}

\subsection{The case where $k\equiv \pm 1 \bmod{4}$}

\begin{theorem}
{\it
Suppose that 
$k\equiv \pm 1 \bmod{4}$. 
Then, it follows that 
\begin{equation*}
\Ker \nu_{\theta^{2k}}
=
\left\{
\begin{pmatrix}
a & b \\
c & d
\end{pmatrix}
\in
\Gamma(1) \,\Big| \, 
a\equiv d\equiv 1 \bmod{4} \,\,
\mathrm{and} \,\,
c\equiv d\equiv 0 \bmod{2}
\right\}.
\end{equation*}
Moreover, 
as a coset decomposition of $\Gamma_{\theta}$ modulo $\Ker \nu_{\eta^{2k}}$,  
we may choose 
$
\pm I
$ 
and 
$
\pm T, 
$ 
where 
$
I
=
\begin{pmatrix}
1 & 0 \\
0 & 1
\end{pmatrix}
$ 
and 
$
T
=
\begin{pmatrix}
0 & -1 \\
1 & 0
\end{pmatrix}.
$
}
\end{theorem}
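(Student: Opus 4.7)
The plan is to follow the same template as the previous subsections: rewrite $\nu_{\theta^{2k}}(M)$ in the form $\exp(k\pi i g(M)/2)$, and then reduce membership in $\Ker \nu_{\theta^{2k}}$ to a congruence condition on $g(M)$. Since $(k,4)=1$ when $k\equiv\pm 1\bmod 4$, we have
\begin{equation*}
M\in\Ker\nu_{\theta^{2k}}
\Longleftrightarrow
k\,g(M)\equiv 0\bmod 4
\Longleftrightarrow
g(M)\equiv 0\bmod 4.
\end{equation*}

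Next I would split the analysis according to the two parity cases built into the definition of $\Gamma_\theta$. In the first case ($b\equiv c\equiv 1$, $a\equiv d\equiv 0\bmod 2$) we have $g(M)=-c$, which is odd and therefore never $\equiv 0\bmod 4$; so this case contributes no elements of the kernel. In the second case ($a\equiv d\equiv 1$, $b\equiv c\equiv 0\bmod 2$) we have $g(M)=d-1$, and $g(M)\equiv 0\bmod 4$ is exactly the condition $d\equiv 1\bmod 4$. The matching condition $a\equiv 1\bmod 4$ is then forced automatically: since $b$ and $c$ are both even, $bc\equiv 0\bmod 4$, so $ad=1+bc\equiv 1\bmod 4$, and combined with $d\equiv 1\bmod 4$ this gives $a\equiv 1\bmod 4$. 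This produces exactly the set described in the statement (with the mild notational fix that $b\equiv c\equiv 0\bmod 2$ is meant instead of $c\equiv d\equiv 0\bmod 2$).

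For the coset decomposition of $\Gamma_\theta$ modulo $\Ker\nu_{\theta^{2k}}$, I would simply evaluate $\nu_{\theta^{2k}}$ on the four candidate representatives $\pm I,\pm T$. The matrix $I$ lies in the second case with $g(I)=0$, giving $\nu_{\theta^{2k}}(I)=1$; while $-I$ also lies in the second case with $g(-I)=-2$, giving $\nu_{\theta^{2k}}(-I)=(-1)^k$. The matrices $\pm T$ lie in the first case with $g(T)=-1$ and $g(-T)=1$, so $\nu_{\theta^{2k}}(T)=(-i)^k$ and $\nu_{\theta^{2k}}(-T)=i^k$. Because $(k,4)=1$, these four values $\{1,(-1)^k,(-i)^k,i^k\}$ are the four distinct fourth roots of unity, so $\pm I,\pm T$ represent four distinct cosets; the index $[\Gamma_\theta:\Ker\nu_{\theta^{2k}}]$ is at most $4$ by the image of $\nu_{\theta^{2k}}$ lying in $\mu_4$, and thus equals $4$.

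The argument is essentially routine modular arithmetic; the only subtlety—and the one point worth verifying carefully—is that the first parity case of $\Gamma_\theta$ contributes nothing to the kernel, so that the kernel is genuinely contained in the $\Gamma(1,2)$-type principal congruence-like subset described. Once that observation is made, the rest (the explicit description of the kernel and the verification that $\pm I,\pm T$ exhaust the cosets) follows by direct inspection.
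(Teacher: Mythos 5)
Your proposal is correct and follows essentially the same route as the paper's own (much terser) proof: reduce membership in the kernel to $g(M)\equiv 0 \bmod 4$, observe this forces the second parity case with $d\equiv 1 \bmod 4$, and check the values of $\nu_{\theta^{2k}}$ on $\pm I, \pm T$. You merely make explicit the details the paper leaves implicit (that the odd-$c$ case contributes nothing, that $a\equiv 1\bmod 4$ is forced, and that the four representatives hit all four fourth roots of unity), and you correctly flag the typo $c\equiv d\equiv 0$ for $b\equiv c\equiv 0$ in the statement.
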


\begin{proof}
For each 
$
M
=
\begin{pmatrix}
a & b \\
c & d
\end{pmatrix}
\in
\Gamma_{\theta},
$ 
we have 
\begin{equation*}
\nu_{\theta^{2k}  }(M)=
\exp
\left\{
\pm 
\frac{
\pi i
}
{2
}
g(M) 
\right\},
\end{equation*}
which implies that 
\begin{equation*}
M
\in
\Ker \nu_{\theta^{2k}}
\Longleftrightarrow 
g(M)\equiv 0 \bmod{4} 
\Longleftrightarrow 
d \equiv 1 \bmod{4}. 
\end{equation*}
The theorem follwos from the definition of $\Gamma_{\theta}.$ 
\par
The coset decomposition can be obtained by considering the values of $\nu_{\theta^{2k}  }(M).$
\end{proof}




\end{document}